\documentclass[preprint,12pt]{elsarticle}

\usepackage{amssymb}

\usepackage{amstext,amssymb,amsmath,amsbsy, dsfont}

\textwidth 15.5cm \oddsidemargin 0.75cm \evensidemargin 0.75cm
\addtolength{\textheight}{2cm} \addtolength{\topmargin}{-1cm}

\usepackage{hyperref}
\usepackage{cleverref}

\usepackage{amscd,dsfont}
\usepackage{amsfonts}
\usepackage{indentfirst}
\usepackage{verbatim}
\usepackage{amsmath}
\usepackage{amsthm}
\usepackage{enumerate}
\usepackage{graphicx}
\usepackage{color}
\usepackage[OT1]{fontenc}
\usepackage[latin1]{inputenc}
\usepackage[english]{babel}
\usepackage{amssymb,esint}
\newtheorem{theorem}{Theorem}
\newtheorem{lemma}{Lemma}

\newtheorem{remark}{Remark}

\setcounter{equation}{0}

\newcommand{\C}{\kappa}

\newcommand{\mN}{\mathbb{N}}

\newcommand{\mZ}{\mathbb{Z}}

\newcommand{\mR}{\mathbb{R}}
\newcommand{\mS}{\mathbb{S}}
\newcommand{\eps}{\varepsilon}

\newcommand{\dsp}{\displaystyle}

\newcommand{\tvarphi}{\widetilde \varphi}

\numberwithin{equation}{section}

\begin{document}

\begin{frontmatter}

\title{Non-local, non-convex functionals converging to Sobolev norms}
\author[add1,add2,add3]{HA\"IM  BREZIS} 

\ead{brezis@math.rutgers.edu}
\address[add1]{Department of Mathematics, 
Rutgers University, Hill Center, Busch Campus, 
	110 Frelinghuysen Road, Piscataway, NJ 08854, USA}
\address[add2]{Departments of Mathematics and Computer Science, Technion, Israel Institute of Technology, 32.000 Haifa, Israel}
\address[add3]{Laboratoire Jacques-Louis Lions,
Sorbonne Universit\'es, UPMC Universit\'e Paris-6, 4  place Jussieu, 
75005 Paris, France,}

\author[add4]{HOAI-MINH NGUYEN}
\ead{hoai-minh.nguyen@epfl.ch}

\address[add4]{Department of Mathematics, EPFL SB CAMA, 	Station 8 CH-1015 Lausanne, Switzerland}

\begin{abstract} We study the pointwise convergence and the $\Gamma$-convergence of a family of non-local, non-convex functionals $\Lambda_\delta$ in $L^p(\Omega)$ for $p>1$. We show that the limits are  multiples of  $\int_{\Omega} |\nabla u|^p$. This is a continuation of our previous work where the case $p=1$ was considered. 
\end{abstract}

\begin{keyword} non-local; non-convex; pointwise convergence;  $\Gamma$-convergence; Sobolev norms. 
\end{keyword}

\end{frontmatter}





\section{Introduction and statement of the main results}

Assume that  $\varphi:[0, +\infty) \to [0, + \infty)$ is defined at {\it every} point of $[0, + \infty)$, $\varphi$ is
continuous on $[0, +\infty)$ except at a finite number of points in $(0, +\infty)$ where it admits a limit from the left and from the right, and  $\varphi(0) = 0$. Let $\Omega \subset \mR^d$ ($d \ge 1$) denote a domain which  is either bounded and smooth, or $\Omega=\mR^d$.  Given a measurable function $u$ on $\Omega$, and a  parameter $\delta > 0$, we define the following non-local functionals, for $p>1$, 
\begin{equation}\label{def-Lambda}
\Lambda (u, \Omega): = \int_\Omega \int_\Omega \frac{\varphi(|u(x) - u(y)|) }{|x - y|^{p + d}} \, dx \,
dy \quad \mbox{and} \quad \Lambda_{\delta}(u, \Omega): = \delta^p \Lambda (u/\delta, \Omega).
\end{equation}
To simplify the notation, we will often delete $\Omega$ and write 
 $\Lambda_\delta (u)$ instead of $\Lambda_\delta(u, \Omega)$. 

As in \cite{BrNg-Nonlocal1},  we consider the following four assumptions on $\varphi$:
\begin{equation}\label{cond-varphi-0}
\varphi(t) \le a t^{p+1} \mbox{ in } [0,1] \mbox{ for some positive constant } a,
\end{equation}
\begin{equation}\label{cond-varphi-1}
\varphi(t) \le b  \mbox{ in } \mR_{+} \mbox{ for some positive constant } b,
\end{equation}
\begin{equation}\label{cond-varphi-2}
\varphi \mbox{ is non-decreasing},
\end{equation}
and
\begin{equation}\label{cond-varphi-3}
\gamma_{d, p}\int_0^\infty \varphi(t) t^{-(p+1)} \,dt=1, \, \mbox{where } \gamma_{d, p}: = \int_{\mS^{d-1}} |\sigma \cdot e |^p \, d \sigma \mbox{ for some } e \in \mS^{d-1}.
\end{equation}


In this paper, we study  the pointwise and the $\Gamma$-convergence of $\Lambda_\delta$ as $\delta \to 0$ for $p>1$. This is a continuation of our previous work \cite{BrNg-Nonlocal1} where the case $p =1$ was investigated in great details.  Concerning the pointwise convergence of $\Lambda_{\delta}$, our main result is 

\begin{theorem}\label{thm-pointwise} Let $d   \ge 1$ and  $p>1$. 
Assume \eqref{cond-varphi-0}, \eqref{cond-varphi-1}, and \eqref{cond-varphi-3} (the monotonicity assumption \eqref{cond-varphi-2} is not required here).  We have

\begin{enumerate}
\item[i)] There exists a positive constant $C_{p, \Omega}$ such that 
\begin{equation}\label{thm-p-1}
\Lambda_\delta(u, \Omega) \le C_{p, \Omega} \int_{\Omega} |\nabla u|^p \, dx \quad \forall \, 
u \in W^{1, p}(\Omega),   \forall \,  \delta > 0; 
\end{equation}
moreover, 
\begin{equation}\label{thm-p-2}
\lim_{\delta \to 0} \Lambda_\delta(u, \Omega) = \int_{\Omega} |\nabla u|^p \, dx \quad \forall \, 
u \in W^{1, p}(\Omega). 
\end{equation}

\item[ii)] Assume in addition that $\varphi$ satisfies  \eqref{cond-varphi-2}. Let  $u \in L^p(\Omega)$ be such that 
\begin{equation}\label{thm-p-3}
\liminf_{\delta \to 0} \Lambda_\delta (u, \Omega) < + \infty, 
\end{equation}
then $u \in W^{1, p}(\Omega)$. 
\end{enumerate}
\end{theorem}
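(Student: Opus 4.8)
The plan is to prove the three assertions in the order i)~upper bound, i)~pointwise limit, ii)~compactness, reducing each to estimates that are one-dimensional in disguise. Throughout we write the double integral in polar coordinates: for $h \in \mR^d$ with $h = r\sigma$, $r>0$, $\sigma \in \mS^{d-1}$,
\begin{equation*}
\Lambda_\delta(u,\Omega) = \int_{\mS^{d-1}} \int_0^\infty \frac{\delta^p}{r^{p+1}} \left( \int_{\{x : x, x+r\sigma \in \Omega\}} \varphi\!\left( \frac{|u(x+r\sigma) - u(x)|}{\delta} \right) dx \right) \frac{dr}{r}\, d\sigma .
\end{equation*}
This exhibits $\Lambda_\delta$ as an average over directions $\sigma$ of a functional in the single real variable $r$, which lets me import the $d=1$ machinery of \cite{BrNg-Nonlocal1}.

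For \eqref{thm-p-1}, I would use \eqref{cond-varphi-0} and \eqref{cond-varphi-1} together with a standard splitting at scale $r \sim \delta$. On the range where $|u(x+r\sigma)-u(x)|/\delta \le 1$, bound $\varphi$ by $a(|u(x+r\sigma)-u(x)|/\delta)^{p+1}$, integrate in $x$ using $\|u(\cdot + r\sigma) - u\|_{L^{p+1}}$-type control (after first reducing to $u$ smooth and compactly supported by density, where $|u(x+r\sigma)-u(x)| \le r\|\nabla u\|_\infty$ pointwise, or more robustly using the $W^{1,p}$ difference quotient bound $\|u(\cdot+h)-u\|_{L^p} \le |h|\,\|\nabla u\|_{L^p}$); on the complementary range bound $\varphi$ by $b$ and by $b(|u(x+r\sigma)-u(x)|/\delta)^{p}$, so that the $\delta^p$ prefactor cancels and one is left with $\int r^{-p-1}\|u(\cdot+r\sigma)-u\|_{L^p}^p \frac{dr}{r}\,d\sigma \lesssim \|\nabla u\|_{L^p}^p$, the Gagliardo–type bound. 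The constant is independent of $\delta$ by homogeneity. For the pointwise limit \eqref{thm-p-2}, I would again argue first for $u \in C^2_c$ (or $C^2 \cap W^{1,p}$), where a Taylor expansion gives $|u(x+r\sigma)-u(x)| = r|\nabla u(x)\cdot \sigma| + o(r)$; substituting and using the substitution $t = r|\nabla u(x)\cdot\sigma|/\delta$ turns the inner $r$-integral into $\int_0^\infty \varphi(t) t^{-(p+1)}\,dt \cdot |\nabla u(x)\cdot\sigma|^p$ in the limit $\delta \to 0$, and then $\int_{\mS^{d-1}} |\nabla u(x)\cdot\sigma|^p\,d\sigma = \gamma_{d,p}|\nabla u(x)|^p$, so the normalization \eqref{cond-varphi-3} produces exactly $\int_\Omega |\nabla u|^p$. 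Passing from smooth $u$ to general $u \in W^{1,p}$ is done by density combined with the uniform bound \eqref{thm-p-1} and a uniform continuity/equi-integrability estimate controlling $|\Lambda_\delta(u) - \Lambda_\delta(v)|$ by $\|\nabla(u-v)\|_{L^p}$ (using $|\varphi(s)-\varphi(t)| $ is controlled on the relevant ranges via \eqref{cond-varphi-0}, \eqref{cond-varphi-1}); the $o(r)$ term and the behavior near $r$ large are handled by \eqref{cond-varphi-1} and the integrability of $\varphi(t)t^{-(p+1)}$.

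For part ii), the compactness statement, the monotonicity \eqref{cond-varphi-2} is essential and this is the step I expect to be the main obstacle. The strategy is the usual one for proving a function with finite nonlocal energy lies in a Sobolev space: show that $\Lambda_\delta(u,\Omega)$ dominates, up to constants and a vanishing error, a genuine difference-quotient (Gagliardo) seminorm $\iint_{|x-y|<\rho} |u(x)-u(y)|^p |x-y|^{-(p+d)}$ restricted to a controlled scale, uniformly in small $\delta$; then $\liminf_\delta \Lambda_\delta(u) < \infty$ forces $\sup_{h} |h|^{-p}\|u(\cdot+h)-u\|_{L^p}^p < \infty$, which by the Riesz–Fréchet–Kolmogorov characterization gives $u \in W^{1,p}$. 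Concretely, using monotonicity one fixes a level $t_0 > 0$ with $\varphi(t_0) > 0$ and observes $\varphi(s) \ge \varphi(t_0)\mathbf{1}_{\{s \ge t_0\}}$; thus $\Lambda_\delta(u,\Omega) \gtrsim \iint_{|u(x)-u(y)| \ge t_0\delta} |x-y|^{-(p+d)}\,dx\,dy \cdot \delta^p$, and one must convert this ``superlevel-set'' measure bound into an $L^p$ modulus bound. Here I would follow the truncation and summation-over-dyadic-levels argument of \cite{BrNg-Nonlocal1} adapted to exponent $p$: estimate, for each dyadic scale, the measure of the set where the difference exceeds a multiple of $\delta$, sum the resulting contributions with weights $2^{kp}$, and use the finiteness of $\liminf_\delta \Lambda_\delta$ to bound the sum; the exponent-$p$ bookkeeping (rather than $p=1$) is the technical heart. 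A diagonal/subsequence argument in $\delta$, together with lower semicontinuity of the Gagliardo seminorm under $L^p$ convergence, then yields $u \in W^{1,p}(\Omega)$, with the boundedness/smoothness of $\Omega$ used to pass from interior difference quotients to the global $W^{1,p}$ statement via an extension operator.
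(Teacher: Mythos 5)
Your reduction to polar coordinates, the use of \eqref{cond-varphi-0}--\eqref{cond-varphi-1} to split $\varphi$, and the normalization computation for the limit are all in line with the paper, but the core of your argument for \eqref{thm-p-1} does not work. The claimed bound $\int_0^\infty r^{-(p+1)}\|u(\cdot+r\sigma)-u\|_{L^p}^p\,dr\lesssim\|\nabla u\|_{L^p}^p$ is false: since $\|u(\cdot+r\sigma)-u\|_{L^p}^p\sim r^p\|\partial_\sigma u\|_{L^p}^p$ as $r\to0$, this integral diverges logarithmically at $r=0$ for every nonconstant $u\in W^{1,p}$ (the classical blow-up of the Gagliardo seminorm as $s\to1$). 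So estimating $\varphi(t)\le bt^p$ on $\{t\ge1\}$ and then discarding the constraint $|u(x)-u(y)|\ge\delta$ cannot close the argument; the constraint must be exploited pointwise in $x$, not merely in $L^p$-average. This is exactly the missing idea, and it is where the paper brings in the directional maximal function: after rescaling, $|u(x+\delta h\sigma)-u(x)|/\delta\le h\,M(\nabla u,\sigma)(x)$ with $M(\nabla u,\sigma)(x)=\sup_{t>0}\frac1t\int_0^t|\langle\nabla u(x+s\sigma),\sigma\rangle|\,ds$; one dominates $\varphi$ by the nondecreasing envelope $\tvarphi=\min\{at^{p+1},b\}$-type function, integrates in $h$ first to get $\bigl(\int_0^\infty\tvarphi(t)t^{-(p+1)}dt\bigr)\,|M(\nabla u,\sigma)(x)|^p$, and concludes with the one-dimensional Hardy--Littlewood maximal theorem applied line by line. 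This single device yields \eqref{thm-p-1} \emph{and} an integrable dominating function, so \eqref{thm-p-2} follows by dominated convergence for every $u\in W^{1,p}$ directly, with no density step.

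The density step you propose is a second genuine gap: a modulus-of-continuity estimate $|\Lambda_\delta(u)-\Lambda_\delta(v)|\le\omega(\|\nabla(u-v)\|_{L^p})$ is not available for the admissible discontinuous, non-convex $\varphi$ (e.g.\ $\varphi=c\,\mathds{1}_{(1,+\infty)}$), and the lack of such continuity is precisely the source of the pathologies this family of functionals exhibits (cf.\ the $p=1$ pathology quoted in the paper). For part ii), your reduction $\varphi(s)\ge\varphi(t_0)\mathds{1}_{\{s\ge t_0\}}$ using \eqref{cond-varphi-2} and \eqref{cond-varphi-3} is correct and is exactly what the paper does, but the paper then simply invokes \cite[Theorem 1]{BourNg}. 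Your sketch of reproving that theorem leaves the hard implication --- that finiteness of $\delta^p\iint_{\{|u(x)-u(y)|>\delta\}}|x-y|^{-(d+p)}\,dx\,dy$ along a sequence $\delta\to0$ controls $\sup_h|h|^{-p}\|u(\cdot+h)-u\|_{L^p}^p$ --- entirely unproved; this is not a routine dyadic bookkeeping adaptation but the substance of the cited result, so you should either cite it or supply its proof in full.
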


\begin{remark} \rm \Cref{thm-pointwise} provides a characterization of the Sobolev space $W^{1, p}(\Omega)$ for $p>1$: 
$$
W^{1, p} (\Omega) = \Big\{u \in L^p(\Omega); \liminf_{\delta \to 0} \Lambda_\delta(u) < + \infty \Big\}. 
$$ 
This fact is originally due to Bourgain and Nguyen \cite{BourNg, NgSob1} when $\varphi  = \hat \varphi_1:  = c \mathds{1}_{(1, + \infty)}$ for 
 an appropriate   constant $c$. 

There are some similarities but also striking differences between the cases $p>1$ and $p=1$. 

\noindent a) First note a similarity. Let $p = 1$ and $\varphi$ satisfy \eqref{cond-varphi-0}-\eqref{cond-varphi-2}, and assume that $u \in L^1(\Omega)$ verifies 
 $$
\liminf_{\delta \to 0} \Lambda_{\delta}(u, \Omega) < + \infty, 
$$
then $ u \in BV(\Omega)$ (see \cite{BourNg, BrNg-Nonlocal1}). 

\noindent b) Next is a major difference.  Let $p=1$. There exists $u \in W^{1, 1}(\Omega)$ such that, for all  $\varphi$ satisfying \eqref{cond-varphi-0}-\eqref{cond-varphi-2}, one has 
$$
\lim_{\delta \to 0} \Lambda_{\delta}(u, \Omega) = + \infty
$$
\cite[Pathology 1]{BrNg-Nonlocal1}. In particular, \eqref{thm-p-1} and \eqref{thm-p-2} do not hold for $p=1$. 
An example in the same spirit was originally constructed by Ponce and is presented in \cite{NgSob1}. Other pathologies occurring in the case $p=1$ can be found in \cite[Section 2.2]{BrNg-Nonlocal1}. 

As we will see later, the proof of \eqref{thm-p-1} involves the theory of maximal functions. The use of this theory was suggested independently by Nguyen \cite{NgSob1} and Ponce and van Schaftingen (unpublished communication to the authors).  The proof of \eqref{thm-p-1}  uses the same strategy as  in \cite{NgSob1}. 

We point out that assertion $ii)$ fails without the monotonicity condition \eqref{cond-varphi-2} on $\varphi$.  Here is an example e.g. with  $\Omega = \mR$. Let $\varphi = c \mathds{1}_{(1, 2)}$ for an appropriate,  positive constant $c$. Let $u = \mathds{1}_{(0, 1)}$. One can easily check that $\Lambda_{\delta} (u) = 0$ for $\delta \in (0, 1/2)$ and  it is clear that $u \not \in W^{1, p}(\mR)$ for $p>1$.   

\end{remark}

Concerning the $\Gamma$-convergence of $\Lambda_{\delta}$, our main result is

\begin{theorem}\label{thm-gamma} Let $d \ge 1$ and $p > 1$. Assume \eqref{cond-varphi-0}-\eqref{cond-varphi-3}. Then
\begin{equation*}
\Lambda_{\delta}(\cdot,  \Omega) \; \Gamma\mbox{-converges in $L^p(\Omega)$ to }  \Lambda_0(\cdot, \Omega) : = \C \int_{\Omega} |\nabla \cdot |^p \, dx, 
\end{equation*}
as $\delta \to 0$,  for some constant $\C$ which depends only on $p$ and $\varphi$, and verifies
\begin{equation}\label{constant-gamma}
0 <   \C \le 1.
\end{equation}
\end{theorem}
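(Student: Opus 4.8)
The plan is to establish the two halves of the $\Gamma$-convergence statement separately: the $\Gamma$-liminf inequality and the $\Gamma$-limsup inequality (construction of a recovery sequence), and then to extract the properties $0 < \C \le 1$ of the constant from the proofs. For the $\Gamma$-limsup inequality, the natural candidate for the recovery sequence of a target $u$ is the constant sequence $u_\delta = u$ itself: if $u \in W^{1,p}(\Omega)$, then by \eqref{thm-p-2} of \Cref{thm-pointwise} we have $\Lambda_\delta(u) \to \int_\Omega |\nabla u|^p$, so the recovery sequence works provided the $\Gamma$-limit equals $\int_\Omega |\nabla u|^p$ — but that would force $\C = 1$, which we do not expect. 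Hence the recovery sequence must be more subtle: one should use oscillating test functions adapted to $\varphi$ (in the spirit of the $p=1$ companion paper \cite{BrNg-Nonlocal1}), and the constant $\C$ emerges from an optimization/relaxation over such oscillation patterns. Concretely, I would first prove the theorem for $u$ smooth (say $u \in C^2_c$ or affine on a cube), build the recovery sequence by a periodic-homogenization-type construction — superimposing on $u$ a fine-scale profile $\delta \psi(x/\eps_\delta)$ with $\eps_\delta \ll \delta \to 0$ chosen appropriately — compute the limit of $\Lambda_\delta$ along it via a change of variables, and identify $\C$ as the infimum of the resulting cell-problem energy; then pass to general $u \in W^{1,p}$ by density together with the continuity of $\Lambda_0$ and a diagonal argument.

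For the $\Gamma$-liminf inequality, suppose $u_\delta \to u$ in $L^p(\Omega)$ with $\liminf_\delta \Lambda_\delta(u_\delta) < +\infty$. The first step is a compactness statement: along a subsequence realizing the liminf, one shows $u \in W^{1,p}(\Omega)$. This should follow by the same mechanism as assertion $ii)$ of \Cref{thm-pointwise} (which handles the diagonal case $u_\delta = u$), upgraded to handle a varying sequence — the monotonicity assumption \eqref{cond-varphi-2} is exactly what makes this work, via truncation of $\varphi$ below a threshold and comparison with the Bourgain–Brezis–Mironescu / Bourgain–Nguyen-type functionals. Once $u \in W^{1,p}$, the lower bound $\liminf_\delta \Lambda_\delta(u_\delta) \ge \C \int_\Omega |\nabla u|^p$ is obtained by a blow-up / localization argument: cover $\Omega$ by small cubes, on each cube freeze $\nabla u$ to a constant vector $\xi$, reduce via scaling to a lower bound of the form $\liminf \Lambda_\delta(v_\delta, Q) \ge \C |\xi|^p |Q|$ for sequences $v_\delta$ converging to the affine function $\xi \cdot x$, and recognize $\C$ as the same cell-problem constant appearing in the upper bound. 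Matching the two constants is the crux — this is the standard but delicate point in $\Gamma$-convergence proofs via homogenization.

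The bounds on $\C$ come out as byproducts. The inequality $\C \le 1$ follows immediately from the $\Gamma$-limsup inequality combined with \eqref{thm-p-1}: applying $\Gamma$-limsup at a fixed $u \in W^{1,p}$ produces a recovery sequence $u_\delta$ with $\Lambda_\delta(u_\delta) \to \C\int|\nabla u|^p$, while \eqref{thm-p-2} gives $\Lambda_\delta(u) \to \int|\nabla u|^p$; since the $\Gamma$-limit is the pointwise-infimal value over all converging sequences, $\C\int|\nabla u|^p \le \int|\nabla u|^p$, hence $\C \le 1$ (using any $u$ with $\int|\nabla u|^p \ne 0$). The strict positivity $\C > 0$ is the harder of the two: it requires showing that $\Lambda_\delta$ cannot be made to vanish asymptotically along any sequence converging to a non-constant limit. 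I would derive this from a quantitative lower bound — e.g., comparing $\Lambda_\delta$ from below with a truncated non-local functional $\delta^p \int\int |x-y|^{-(p+d)} \tilde\varphi(|u(x)-u(y)|/\delta)$ where $\tilde\varphi(t) = \min(\varphi(t), \C_0 \mathds{1}_{(t_0,\infty)})$ for suitable $\C_0, t_0 > 0$ guaranteed by \eqref{cond-varphi-3} (which forces $\varphi$ not to be identically zero), and invoking the known positivity of the $\Gamma$-limit in the Bourgain–Nguyen setting $\varphi = c\,\mathds{1}_{(1,+\infty)}$. This comparison step, and the verification that the cell-problem constant is strictly positive, is where I expect the main technical obstacle to lie.
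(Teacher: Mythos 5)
Your overall architecture --- liminf via localization to small cubes where $\nabla u$ is nearly affine, limsup via a cell-problem recovery sequence, $\C \le 1$ from the constant competitor together with \eqref{thm-p-2}, and $\C > 0$ by comparison with the indicator case --- is the same as the paper's. Your liminf argument in particular tracks the actual proof closely: the paper covers $\mR^d$ by cubes of side $1/\ell$ on which $g$ is approximately affine (Lemma~\ref{lem-lem}, an Egorov-type statement), applies a quantitative stability lemma on each cube (Lemma~\ref{lem-Lp}, proved by compactness and contradiction from Lemma~\ref{lem-affine1}), and sums; the needed compactness ($g \in W^{1,p}$ for a varying sequence with bounded energy) is quoted from \cite{NgGamma}, as you anticipate. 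Your reasoning for the bounds on $\C$ is also the intended one: $\C \le 1$ is immediate from taking $v_\delta = U$ in the cell problem and invoking \eqref{thm-p-2}, and $\C>0$ follows since \eqref{cond-varphi-2}--\eqref{cond-varphi-3} force $\varphi \ge \varphi(t_0)\, \mathds{1}_{[t_0,\infty)}$ with $\varphi(t_0)>0$ for some $t_0$, reducing positivity to the known case $\varphi = c\,\mathds{1}_{(1,+\infty)}$ of \cite{NgGammaCRAS, NgGamma}.

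The one place where your plan has a genuine gap is precisely the point you flag as ``the crux,'' namely the matching of the upper- and lower-bound constants. The paper does not derive two cell formulas and then prove they coincide; it \emph{defines} $\C$ once and for all by \eqref{def-k} as the infimum of $\liminf_{\delta \to 0} \Lambda_\delta(v_\delta, Q)$ over \emph{all} families $(v_\delta) \subset L^p(Q)$ with $v_\delta \to U$, where $U$ is a fixed affine function with $|\nabla U| = 1$ on the unit cube. Lemma~\ref{lem-affine1} then shows by scaling and covering that this single constant governs every affine function on every Lipschitz domain, which feeds both the lower bound (through Lemma~\ref{lem-Lp}) and the recovery construction for (G2), the latter obtained by gluing near-optimal competitors of this same unrestricted cell problem over a fine partition (the paper defers the details to the $p=1$ companion paper \cite{BrNg-Nonlocal1}). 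Your more specific ansatz $u + \delta\psi(x/\eps_\delta)$ with a periodic profile $\psi$ restricts the class of competitors, so the constant it produces is a priori only an upper bound for the unrestricted cell infimum and need not agree with the constant coming from your lower bound; there is no reason the optimal oscillation pattern should be periodic, or of that particular two-scale form. To close the gap you should replace the periodic ansatz by arbitrary near-minimizers of the cell problem \eqref{def-k}, at which point the matching is automatic by construction rather than something to be proved.
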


 \Cref{thm-gamma} was known earlier when  $\varphi = 
\hat \varphi_1 $ \cite{NgGammaCRAS, NgGamma}.

\medskip 
The paper is organized as follows. \Cref{thm-pointwise} is proved in \Cref{sect-pointwise} and the proof of \Cref{thm-gamma} is given in \Cref{sect-gamma}. Throughout the paper, we denote
$$
\varphi_\delta(t) := \delta^p \varphi(t/\delta) \mbox{ for } p>1, \delta > 0, t \ge 0. 
$$

\section{Proof of Theorem~\ref{thm-pointwise}}\label{sect-pointwise}

In view of the fact that  $\liminf_{t \to + \infty} \varphi (t) > 0$, assertion \eqref{thm-p-3} is a direct consequence of \cite[Theorem 1]{BourNg}; note that   \cite[Theorem 1]{BourNg} is stated for $\Omega = \mR^d$ but the proof can be easily adapted to the case where $\Omega$ is bounded. It could also be deduced from \Cref{thm-gamma}. 

We now establish assertions \eqref{thm-p-1} and \eqref{thm-p-2}. The proof consists of two steps. 

{\it Step 1}: Proof of  \eqref{thm-p-1} and \eqref{thm-p-2} when $\Omega  = \mR^d$ and $u \in W^{1, p}(\mR^d)$.  Replacing $y$ by $x + z$ and using polar coordinates in the $z$ variable, we find
\begin{equation}\label{tt1}
\int_{\mR^d} \, dx  \int_{\mR^d} \frac{\varphi_\delta(|u(x) - u(y)|)}{|x -y|^{p + d}} \, dy = \int_{\mR^d} \, dx 
\int_0^{+\infty}   \, dh  \int_{\mS^{d-1}} \frac{\varphi_\delta(|u(x + h \sigma) - u(x)|)}{h^{p+1}} \, d \sigma .
\end{equation}
We have
\begin{multline}\label{tt2}
\int_{\mR^d} \, dx \int_0^{+\infty}\, d h \int_{\mS^{d-1}} \frac{\varphi_\delta(|u(x + h \sigma) - u(x)|)}{h^{p+1}} \, d
\sigma \\[6pt]
= \int_{\mR^d} \, dx  \int_0^{+\infty} \, d h \int_{\mS^{d-1}} \frac{\delta^p \varphi \Big(|u(x + h \sigma) - u(x)| \big/
\delta \Big)}{h^{p+1}} \, d \sigma.
\end{multline}
Rescaling the variable $h$ gives
\begin{multline}\label{tt3}
\int_{\mR^d} \, dx \int_0^{+\infty} \, dh  \int_{\mS^{d-1}}  \frac{\delta^p \varphi \Big(|u(x + h \sigma) - u(x)| \big/
\delta \Big)}{h^{p+1}}  \, d \sigma\\[6pt]
= \int_{\mR^d} \, dx  \int_0^{+\infty} \, d h \int_{\mS^{d-1}}\frac{ \varphi \Big(|u(x + \delta h \sigma) - u(x)| \big/
\delta \Big)}{h^{p+1}} \, d \sigma  .
\end{multline}
Combining \eqref{tt1}, \eqref{tt2}, and \eqref{tt3} yields
\begin{equation}\label{tt4}
\int_{\mR^d} \ dx  \int_{\mR^d} \frac{\varphi_\delta(|u(x) - u(y)|)}{|x -y|^{d+p}} \, dy = \int_{\mR^d} \, dx
\int_0^{+\infty} \, d h \int_{\mS^{d-1}} \frac{ \varphi \Big(|u(x + \delta h \sigma) - u(x)| \big/
\delta \Big)}{h^{p+1}}  \, d \sigma .
\end{equation}
Note that
\begin{equation}\label{tt5}
\lim_{\delta \to 0}\frac{|u(x + \delta h \sigma) - u(x)|}{\delta}  = |\langle \nabla u(x),  \sigma \rangle | h  \mbox{ for a.e. 
} (x, \, h, \, \sigma) \in \mR^d \times [0, + \infty) \times \mS^{d-1}.
\end{equation}
Here and in what follows, $\langle ., . \rangle$ denotes the usual scalar product in $\mR^d$.  
Since $\varphi$ is continuous at $0$ and on $(0, +\infty)$ except at a finite number of points, it follows that
\begin{multline}\label{tt6}
\lim_{\delta \to 0}\frac{1}{h^{p+1}} \varphi \Big(|u(x + \delta h \sigma) - u(x)|\big/\delta \Big) =
\frac{1}{h^{p+1}}\varphi \Big(|\langle \nabla u(x),  \sigma \rangle| h \Big) \\[6pt]
\mbox{ for  a.e. } (x, \, h, \, \sigma) \in \mR^d \times (0, + \infty) \times \mS^{d-1}. 
\end{multline}
Rescaling once more the variable $h$ gives
\begin{equation}\label{tt7}
 \int_{0}^{\infty}  \, d h \int_{\mS^{d-1}} \frac{1}{h^{p+1}} \varphi\Big( | \langle \nabla u (x),  \sigma \rangle | h\Big)  \, d \sigma = |\nabla u(x)|^p \int_{0}^{\infty} \varphi(t) t^{-(p+1)}  \, dt \; \int_{\mS^{d-1}} |\langle \sigma,  e \rangle|^p \, d \sigma ;
\end{equation}
here we have also used the obvious fact that, for every $V \in \mR^d$, and for any fixed $e \in \mS^{d-1}$,
\begin{equation*}
\int_{\mS^{d-1}} | \langle V,  \sigma \rangle |^p \, d \sigma = |V|^p \int_{\mS^{d-1}} |\langle e, \sigma \rangle|^p \, d \sigma. 
\end{equation*}
Thus, by the normalization condition \eqref{cond-varphi-3}, we obtain
\begin{equation}\label{tt8}
\int_{\mR^d} \, dx  \int_{0}^\infty  \, d h \int_{\mS^{d-1}} \frac{1}{h^{p+1}} \varphi\Big( | \langle \nabla u (x), \sigma \rangle| h\Big)  \, d \sigma
 =  \int_{\mR^d} |\nabla u|^p \, dx.
\end{equation}

Set 
\begin{equation*}
\tvarphi (t) = \left\{\begin{array}{cl}
a t^{p+1} & \mbox{ for } t \in [0, 1), \\[6pt]
b & \mbox{ for } t \in [1, + \infty). 
\end{array}\right.
\end{equation*}
Then 
\begin{equation}\label{tt9-0}
\mbox{$\tvarphi$ is non-decreasing and $\varphi \le \tvarphi$}.  
\end{equation}
Note that,  for a.e. $(x, \, h, \, \sigma) \in \mR^d \times (0, + \infty) \times \mS^{d-1}$, 
\begin{equation}\label{tt9}
\frac{|u(x + \delta h \sigma) - u(x)|}{\delta} \le \frac{1}{\delta}\int_{0}^{ h \delta} | \langle \nabla u (x + s \sigma), \sigma \rangle | \, ds  \le h
M(\nabla u, \sigma)(x),
\end{equation}
where
\begin{equation*}
M (\nabla u, \sigma) (x) : = \sup_{t > 0}\frac{1}{t}\int_{0}^{t} | \langle \nabla u (x + s \sigma),  \sigma \rangle |  \, ds.
\end{equation*}
Combining \eqref{tt4} and \eqref{tt9}, we derive from \eqref{tt9-0} that 
\begin{multline}\label{tt-max-0}
\Lambda_\delta (u) \le \int_{\mS^{d-1}} \int_{\mR^d} \int_0^\infty \frac{\tvarphi( h |M(\nabla u, \sigma)(x)| )}{h^{p+1}} \, dh  \, dx \, d \sigma \\[6pt]
= \int_0^{+\infty} \tvarphi (t) t^{-(p+1)} \, dt  \int_{\mS^{d-1}} \int_{\mR^d} |M(\nabla u, \sigma)(x)|^p \, dx \, d \sigma. 
\end{multline}

We claim that, for $\sigma \in \mS^{d-1}$, 
\begin{equation}\label{tt-max}
\int_{\mR^d} |M(\nabla u, \sigma)(x)|^p \, dx   \le  C_p \int_{\mR^d} |\nabla u (x)|^p \, dx. 
\end{equation}
For notational ease, we will only consider the case  $\sigma = e_1$. By the theory of maximal functions (see e.g. \cite{SteinHarmonic}), one has, for $g \in L^p(\mR)$, 
$$
\int_{\mR} \left|\sup_{t > 0} \fint_{\xi-t}^{\xi + t} |g(s)| \, ds \right|^p \,  d \xi  \le C_p \int_{\mR} |g(\xi)|^p \, d \xi. 
$$
Using this inequality with $g(x_1) = \partial_{x_1} u(x_1, x')$ for $x' \in \mR^{d-1}$, we obtain
$$
\int_{\mR} |M(\nabla u, e_1)(x_1, x')|^p \, dx_1 \le C_p \int_{\mR} |\partial_{x_1} u(x_1, x')|^p \, dx_1. 
$$
Integrating with respect to $x'$ yields 
$$
\int_{\mR^d} |M(\nabla u, e_1)(x)|^p \, dx \le C_p \int_{\mR^{d-1}} \int_{\mR} |\partial_{x_1} u (x_1, x')|^p \, dx_1 \, dx' \le  C_p \int_{\mR^d} |\nabla u (x)|^p \, dx,
$$
and \eqref{tt-max} follows. 

Using \eqref{tt-max}, we deduce from \eqref{tt-max-0} that 
$$
\Lambda_\delta (u) \le C_{p, d} \int_{\mR^d} |\nabla u|^p \, dx, 
$$
which is \eqref{thm-p-1}. From \eqref{tt6}, \eqref{tt7}, \eqref{tt8}, and \eqref{tt9}
we derive, using the dominated convergence theorem, that 
\begin{equation*}
\lim_{\delta \to 0} \Lambda_\delta (u) =  \int_{\mR^d} |\nabla u|^p \, dx. 
\end{equation*}
This completes Step 1.

\medskip
{\it Step 2}: Proof of  \eqref{thm-p-1} and \eqref{thm-p-2} when $\Omega$ is bounded and $u \in W^{1, p}(\Omega)$.  We first claim that 
\begin{equation}\label{tt12}
\lim_{\delta \to 0} \Lambda_\delta(u) =  \int_{\Omega} |\nabla u|^p \mbox{ for } u \in W^{1, p}(\Omega). 
\end{equation}
Indeed, consider an extension of $u$ in $\mR^d$ which belongs to $W^{1, p}(\mR^d)$, and is still denoted by $u$. By the same method as in the case $\Omega = \mR^d$, we have
\begin{equation}\label{tt12-1}
\lim_{\delta \to 0}\int_{\Omega} \, dx  \int_{\mR^d} \frac{\varphi_\delta(|u(x) - u(y)|)}{|x -y|^{p + d}} \, dy = \int_{\Omega} |\nabla u|^p dx 
\end{equation}
and, for $D \Subset \Omega$ and $\eps > 0$,  
\begin{equation}\label{tt12-2}
\lim_{\delta \to 0}\int_{D} \, dx  \int_{B(x, \eps)} \frac{\varphi_\delta(|u(x) - u(y)|)}{|x -y|^{p + d}} \, dy = \int_{D} |\nabla u|^p \,  dx. 
\end{equation}
Combining \eqref{tt12-1} and \eqref{tt12-2} yields \eqref{tt12}. 

We next show that 
\begin{equation}\label{tt13}
 \Lambda_\delta(u) \le  C_{p, \Omega} \int_{\Omega} |\nabla u|^p \, dx  \mbox{ for } u \in W^{1, p} (\Omega). 
\end{equation}
Without loss of generality, we may assume that $\int_{\Omega} u = 0$. 
Consider an extension $U$ of $u$ in $\mR^d$ such that 
$$
\int_{\mR^d} |\nabla U|^p \, dx \le C_{p, \Omega} \int_{\Omega} |\nabla u|^p  \, dx. 
$$
Such an extension exists since $\Omega$ is smooth and $\int_{\Omega} u = 0$, see, e.g., \cite[Chapter 9]{BrAnalyse1}.  Using the fact
$$
\Lambda_{\delta}(u, \Omega) \le \Lambda_\delta (U, \mR^d) \le C_{p, d} \int_{\mR^d} |\nabla U|^p \, dx, 
$$
we get \eqref{tt13}.  The proof is complete. \qed

\section{Proof of Theorem~\ref{thm-gamma}}\label{sect-gamma}

We first recall  the meaning of $\Gamma$-convergence. One says that  $\Lambda_\delta(\cdot, \Omega)$ $\dsp \mathop{\to}^{\Gamma} \Lambda_0(\cdot, \Omega)$ in $L^p(\Omega)$ as $\delta \to 0$ if  
\begin{enumerate}
\item[(G1)] For each $g \in L^p(\Omega)$ and for {\it every} family
$(g_\delta) \subset L^p(\Omega)$ such that
$(g_\delta)$ converges to $g$ in $L^p(\Omega)$ as $\delta \to 0$,
one has
\begin{equation*}
\liminf_{\delta \to 0} \Lambda_\delta(g_\delta, \Omega) \ge \Lambda_0(g, \Omega).
\end{equation*}
\item[(G2)] For each $g \in L^p(\Omega)$, there {\it exists} a family
$(g_\delta) \subset L^p(\Omega)$ such that
$(g_\delta)$ converges to $g$ in $L^p(\Omega)$ as $\delta \to 0$,
and
\begin{equation*}
\limsup_{\delta \to 0} \Lambda_\delta(g_\delta, \Omega) \le \Lambda_0(g, \Omega).
\end{equation*}
\end{enumerate}

Denote $Q$ the unit open cube, i.e., $Q = (0, 1)^d$ and set 
$$
U(x) =d^{-1/2} \sum_{j=1}^d x_j \mbox{ in } Q,  
$$
so that $|\nabla U| = 1$ in $Q$.

In the following two subsections, we establish   properties (G1) and (G2) 
where $\C$ is the constant defined by 
\begin{equation}\label{def-k}
\C =  \inf \liminf_{\delta \to 0} \Lambda_{\delta} (v_\delta, Q). 
\end{equation}
Here the infimum is taken over all families of  functions $(v_\delta) \subset L^p(Q)$ such that $v_\delta \to U$ in $L^p(Q)$ as $\delta \to 0$.

\subsection{Proof of Property (G1)} 

We begin with

\begin{lemma}\label{lem-affine1}
Let $d \ge 1$, $p >  1$,  $S$ be an open bounded subset of $\mR^d$ with  Lipschitz boundary,  and let  $g$ be an affine function. Then
\begin{equation}\label{k-S}
\inf  \liminf_{\delta \to 0}\Lambda_\delta(g_\delta, S)  =  \C |\nabla g|^p |S|,
\end{equation}
where the infimum is taken over all families  $(g_\delta) \subset L^p(S)$  such
that $g_\delta \to g$ in $L^p(S)$ as $\delta \to 0$. 
\end{lemma}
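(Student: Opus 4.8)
The plan is to reduce the statement on an arbitrary Lipschitz domain $S$ with an arbitrary affine $g$ to the definition \eqref{def-k} of $\C$, which concerns the specific function $U$ on the unit cube $Q$. The strategy has two halves: first establish the reduction when $g$ is replaced by a rescaled/rotated copy of $U$ and $S$ by a cube, then remove these restrictions by a covering argument.

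\emph{Step 1: invariances of the local energy density.} First I would record how $\Lambda_\delta$ transforms under the natural symmetries. If $T$ is a rigid motion (rotation plus translation) of $\mR^d$, then $|x-y|$ is preserved, so $\Lambda_\delta(v\circ T, T^{-1}S) = \Lambda_\delta(v, S)$; in particular the right-hand side of \eqref{k-S} is rotation- and translation-invariant, as it must be. Next, under the dilation $x \mapsto \lambda x$ one checks, by the change of variables $x = \lambda \tilde x$, $y = \lambda \tilde y$ in \eqref{def-Lambda}, that $\Lambda_\delta(v(\lambda \cdot), \lambda^{-1} S)$ relates to $\Lambda_{\delta/\lambda}(v, S)$ up to the explicit factor coming from $dx\,dy/|x-y|^{p+d}$, which scales like $\lambda^{d}$ (so that, after accounting for the $\delta^p$ normalization, the whole expression is homogeneous of the correct degree and $|S|$ appears linearly). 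The upshot I want is: the quantity $\inf\liminf_{\delta\to0}\Lambda_\delta(g_\delta,S)/(|\nabla g|^p|S|)$ is unchanged if we replace $(g,S)$ by $(U, Q)$, \emph{provided} $g$ is a nonconstant affine function and $S$ is a cube with faces suitably oriented; and if $\nabla g = 0$ then both sides of \eqref{k-S} vanish since one may take $g_\delta = g$ constant and $\Lambda_\delta(\text{const}) = 0$ (using $\varphi(0)=0$).

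\emph{Step 2: from a cube to a general Lipschitz domain.} This is where the real work lies, and I expect it to be the main obstacle. Given a general Lipschitz $S$ and affine $g$ with $\nabla g \ne 0$, fix $\eta > 0$ and tile $\mR^d$ by a grid of small cubes $Q_i$ of side $r$; let $\mathcal I$ be the indices with $Q_i \Subset S$ (up to a boundary layer of measure $O(\eta|S|)$ by Lipschitz regularity). On each $Q_i$, $g$ is affine with the same gradient, so by Step 1 there is a recovery family $(g_\delta^i)$ on $Q_i$ with $g_\delta^i \to g$ in $L^p(Q_i)$ and $\liminf_\delta \Lambda_\delta(g^i_\delta, Q_i) \le (\C+\eta)|\nabla g|^p|Q_i|$; by a diagonal argument one may assume $\limsup$ along a common sequence $\delta_n \to 0$. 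For the upper bound on $S$, glue: define $g_{\delta_n}$ to equal $g^i_{\delta_n}$ on $Q_i$ for $i \in \mathcal I$ and $g$ on the leftover boundary layer. The cross terms in the double integral $\Lambda_{\delta_n}(g_{\delta_n}, S)$ coming from pairs $(x,y)$ in different cubes, or in the boundary layer, must be controlled — this is the delicate point. I would handle it by noting that the kernel $|x-y|^{-(p+d)}$ is integrable away from the diagonal, that $\varphi$ is bounded by \eqref{cond-varphi-1}, and that $g_{\delta_n} \to g$ in $L^p$ so that $|g_{\delta_n}(x) - g_{\delta_n}(y)|$ is small on the bulk; combined with $\varphi(t) \le a t^{p+1}$ near $0$ and $\varphi(t)\le b$ globally, the contribution of non-neighboring cubes and of the boundary layer is $O(\eta)$ as $\delta_n\to 0$ then $\eta \to 0$ (possibly after also letting the grid refine). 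This yields $\inf\liminf_\delta \Lambda_\delta(g_\delta,S) \le \C|\nabla g|^p|S|$.

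\emph{Step 3: the matching lower bound.} For the reverse inequality "$\ge$", let $(g_\delta)$ be \emph{any} family with $g_\delta \to g$ in $L^p(S)$. Here I would use superadditivity of $\Lambda_\delta$ over disjoint subdomains, i.e. $\Lambda_\delta(g_\delta, S) \ge \sum_{i\in\mathcal I}\Lambda_\delta(g_\delta, Q_i)$ (immediate, since the integrand is nonnegative and $Q_i \subset S$), together with the fact that $g_\delta|_{Q_i} \to g|_{Q_i}$ in $L^p(Q_i)$ and each $Q_i$ is a cube on which $g$ is affine; applying Step 1 on each $Q_i$ gives $\liminf_\delta \Lambda_\delta(g_\delta, Q_i) \ge \C|\nabla g|^p|Q_i|$, and summing over $i\in\mathcal I$, then sending the grid size to zero so that $\sum_{i\in\mathcal I}|Q_i| \to |S|$, yields $\liminf_\delta \Lambda_\delta(g_\delta,S) \ge \C|\nabla g|^p|S|$. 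Taking the infimum over $(g_\delta)$ finishes the proof.

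\medskip
The main obstacle, as indicated, is the gluing estimate in Step 2: one must verify that assembling the local recovery families into a global one does not create an uncontrolled interaction energy across cube interfaces or in the thin layer near $\partial S$. The saving features are the strict integrability of $|x-y|^{-(p+d)}$ off the diagonal, the two-sided bound $\varphi(t) \le \min\{a t^{p+1}, b\}$ from \eqref{cond-varphi-0}–\eqref{cond-varphi-1}, and the $L^p$ (hence, after truncation, essentially uniform on most of $S$) convergence of the recovery sequences, which together force all the cross contributions to be negligible in the iterated limit.
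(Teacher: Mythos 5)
Your high-level plan---reduce to the definition \eqref{def-k} via the invariances of $\Lambda_\delta$ and then cover---is indeed the strategy behind the paper's proof (which defers to the first part of Lemma~6 of \cite{BrNg-Nonlocal1}), and your Step~1 and Step~3 are essentially sound. But Step~2 has a genuine gap: the interface cross terms in the gluing are not controlled by the ingredients you invoke, and in fact they can be infinite. The dangerous pairs are $x\in Q_i$, $y\in Q_j$ with $|x-y|$ small near a common face $F$; there the kernel $|x-y|^{-(p+d)}$ is \emph{not} integrable (for straddling pairs, $\int\int |x-y|^{-(p+d)}\,dx\,dy\sim \int_0 t^{-p}\,dt=+\infty$), and $L^p$ convergence of $g_\delta^i$ to $g$ gives no pointwise control of $|g_\delta^i(x)-g_\delta^j(y)|$ across $F$. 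Concretely, the families $g_\delta^i=g$ on $Q_i$ and $g_\delta^j=g+\eta_\delta$ on $Q_j$, with $\eta_\delta\to0$ but $\eta_\delta/\delta\to+\infty$, are both admissible recovery families, yet their glued cross term equals $+\infty$ for every $\delta$, since $\varphi_\delta(|g(x)-g(y)+\eta_\delta|)\ge \varphi_\delta(\eta_\delta/2)>0$ for all straddling pairs with $|x-y|$ small. So "the cross contributions are negligible" is false without modifying the recovery families near $\partial Q_i$, which is a delicate matter for these non-convex functionals. The standard way out---and what makes this a pure covering argument---is to never glue: first prove the lower bound $\kappa(S')\ge \C\,|\nabla g|^p|S'|$ for \emph{every} open bounded $S'$ with negligible boundary (your Step~3), note that $\kappa(\widetilde Q)=\C\,|\nabla g|^p|\widetilde Q|$ exactly for suitably oriented cubes (your Step~1, both inequalities coming from the exact bijection of admissible families), then place $S\Subset\widetilde Q$ after a dilation and use superadditivity on $\widetilde Q=S\cup(\widetilde Q\setminus\overline S)$: $\C|\nabla g|^p|\widetilde Q|=\kappa(\widetilde Q)\ge \kappa(S)+\kappa(\widetilde Q\setminus\overline S)\ge \kappa(S)+\C|\nabla g|^p\,|\widetilde Q\setminus S|$, which yields the upper bound $\kappa(S)\le \C|\nabla g|^p|S|$ with no interface terms at all.

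A secondary point you should make explicit: Step~1 reduces $(g,\widetilde Q)$ to $(U,Q)$ only when the cube is oriented compatibly with $\nabla g$, yet Steps~2--3 tile with "a grid of small cubes" for an arbitrary affine $g$. Either use a grid rotated to match $\nabla g$ (a rotated lattice still tiles $\mR^d$ and still exhausts $S$ up to a thin layer), or first show that the value $\kappa(e,R):=\inf\liminf_\delta\Lambda_\delta(\cdot,RQ)$ for $|\nabla g|=1$, $\nabla g/|\nabla g|=e$, is independent of the rotation $R$ by covering one rotated cube from inside by small copies of another (in both directions, using superadditivity and dilation covariance), and then independent of $e$ by the rotation invariance of $\Lambda_\delta$; this is also what identifies the constant with $\C=\kappa(e_0,I)$ from \eqref{def-k}.
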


\begin{proof}
The proof of Lemma~\ref{lem-affine1} is based on the definition of $\C$ in \eqref{def-k} and a covering argument. It is identical  to the one of 
 the first part of \cite[Lemma 6]{BrNg-Nonlocal1}. The details are omitted. 
\end{proof}

The proof of  Property (G1) for $p>1$ relies on the following lemma with  roots in \cite{NgGamma}.  

\begin{lemma}\label{lem-Lp}
Let $d \ge 1$, $p >  1$, and $\eps >0$. There exists two positive constants $ \hat \delta_1, \hat \delta_2$ such that for every open cube  $\widetilde Q$ which is an image of $Q$ by a dilation, for every $a \in \mR^d$, every $b\in \mR$,  and every $h \in L^p(\widetilde Q)$ satisfying 
\begin{equation}
\fint_{\widetilde Q} |h(x) - (\langle a, x\rangle + b)|^p \, dx \le \hat \delta_1 |a|^p |\widetilde Q|^{p/ d},
\end{equation}
one has 
\begin{equation}
\Lambda_{\delta} (h, \widetilde Q) \ge (\C -  \eps)|a|^p |\widetilde Q| \mbox{ for } \delta \in (0, \hat \delta_2 |a| |\widetilde Q|^{1/d}). 
\end{equation}
\end{lemma}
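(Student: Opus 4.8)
The plan is to reduce, by scaling, to the reference cube $Q$ with slope $|a|=1$, and then to argue by contradiction using the definition of $\C$ in \eqref{def-k}. First I would normalize: writing $\widetilde Q = x_0 + \rho Q$ and setting $\tilde h(y) := \rho^{-1}|a|^{-1} h(x_0 + \rho y) + (\text{affine correction})$, the hypothesis becomes $\fint_Q |\tilde h - U|^p \le C \hat\delta_1$ (after composing with a rotation sending $a/|a|$ to the direction of $\nabla U$, using that $\Lambda_\delta$ and the quantities involved are rotation-invariant), and the desired conclusion becomes $\Lambda_{\delta/(\rho|a|)}(\tilde h, Q) \ge \C - \eps$ for $\delta/(\rho|a|) \in (0,\hat\delta_2)$. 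So it suffices to prove: there exist $\hat\delta_1,\hat\delta_2>0$ such that for all $h$ with $\fint_Q|h-U|^p \le \hat\delta_1$ one has $\Lambda_\delta(h,Q) \ge \C-\eps$ whenever $\delta \in (0,\hat\delta_2)$.

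To prove this normalized statement I would argue by contradiction. Suppose it fails: then for some $\eps>0$ there are sequences $\delta_n \to 0$ (one must check that the bad $\delta_n$ can be taken to go to $0$ — if $\hat\delta_2$ were the obstruction, shrink it) and $h_n \in L^p(Q)$ with $\fint_Q |h_n - U|^p \le 1/n$ and $\Lambda_{\delta_n}(h_n, Q) < \C - \eps$. Then $h_n \to U$ in $L^p(Q)$, so $(h_n)$ is an admissible competitor family in the infimum \eqref{def-k} (one may need to pad it into a genuine family indexed by $\delta\to 0$, e.g. setting $v_\delta = h_n$ for $\delta \in (\delta_{n+1},\delta_n]$ and checking $v_\delta \to U$), whence $\liminf_{\delta\to 0}\Lambda_\delta(v_\delta,Q) \ge \C$, contradicting $\Lambda_{\delta_n}(h_n,Q) < \C-\eps$ along the subsequence $\delta_n$. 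This yields the existence of $\hat\delta_1$ for the reference cube with $|a|=1$, and scaling propagates it to all $\widetilde Q$ and all $a$.

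The main obstacle I anticipate is the bookkeeping in the contradiction step: the constant $\C$ in \eqref{def-k} is defined as an infimum over families $(v_\delta)$ indexed by the \emph{continuous} parameter $\delta \to 0$, whereas the contradiction naturally produces a sequence $h_n$ attached to a discrete sequence $\delta_n$. One must interpolate these $h_n$ into a legitimate family $(v_\delta)_{\delta>0}$ converging to $U$ in $L^p(Q)$ — the step function construction above works since $\fint_Q|v_\delta - U|^p$ is controlled by $1/n$ on each block and $n\to\infty$ as $\delta\to 0$ — and then note that $\liminf_{\delta\to 0}\Lambda_\delta(v_\delta,Q)$, being bounded below by $\C$, forces $\Lambda_{\delta_n}(v_{\delta_n},Q) = \Lambda_{\delta_n}(h_n,Q) \ge \C - \eps/2$ for $n$ large, the contradiction. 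A secondary point requiring care is the precise form of the scaling identity for $\Lambda_\delta$ under dilations of the domain: from \eqref{def-Lambda}, $\Lambda_\delta(h,\widetilde Q)$ transforms under $x \mapsto x_0+\rho y$ with a factor $\rho^d$ from each of $dx,dy$ and $\rho^{-(p+d)}$ from $|x-y|^{-(p+d)}$, giving an overall $\rho^{-p}$, which combines with the rescaling of the slope and of $\delta$ exactly as needed; I would verify this identity explicitly before invoking it, as it is the mechanism by which $|\widetilde Q|^{1/d} = \rho$ enters the thresholds.
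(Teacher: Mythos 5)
Your overall strategy (rescale to the unit cube, then argue by contradiction from the definition of $\C$ in \eqref{def-k}) is the same as the paper's, your scaling identity $\Lambda_\delta(h,\widetilde Q)=|a|^p|\widetilde Q|\,\Lambda_{\delta/(\rho|a|)}(\tilde h,Q)$ is correct, and your handling of the discrete-versus-continuous $\delta$ issue (interpolating the contradiction sequence $(h_n,\delta_n)$ into a genuine family $(v_\delta)$) is sound and in fact more careful than what the paper writes. However, there is one genuine gap: the reduction ``after composing with a rotation sending $a/|a|$ to the direction of $\nabla U$, using that $\Lambda_\delta$ and the quantities involved are rotation-invariant.'' The double integral $\Lambda_\delta$ is invariant only under \emph{simultaneous} rotation of the function and the domain, and the cube $Q=(0,1)^d$ is not rotation-invariant. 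After rotating so that the slope becomes $\nabla U=d^{-1/2}(1,\dots,1)$, your competitors live on a rotated cube $R^{-1}Q$, whereas the infimum in \eqref{def-k} is taken only over families on the standard cube $Q$ converging to the standard function $U$. A general unit direction $e$ is not carried to $d^{-1/2}(1,\dots,1)$ by any symmetry of the cube, so the lower bound $\liminf_\delta\Lambda_\delta(v_\delta)\ge\C$ for families converging to $\langle e,\cdot\rangle$ on $Q$ (equivalently, to $U$ on a rotated cube) is \emph{not} a consequence of rotation invariance; it is a separate statement requiring proof.

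The paper closes exactly this gap with Lemma~\ref{lem-affine1}, which asserts that for \emph{every} affine $g$ and every bounded Lipschitz set $S$ the corresponding infimum equals $\C|\nabla g|^p|S|$; this is proved by a covering argument (tiling $S$ by small axis-aligned cubes on which one compares with translates and dilates of $U$, up to a negligible boundary layer), not by symmetry. Accordingly, in the contradiction step the paper does not rotate: it keeps the unit slopes $a_n$ as free parameters, extracts a convergent subsequence $a_n\to a$ so that $h_n\to\langle a,\cdot\rangle$ in $L^p(Q)$, and then applies Lemma~\ref{lem-affine1} with $S=Q$, $g=\langle a,\cdot\rangle$ to reach the contradiction. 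Your argument becomes correct if you replace the rotation step by an appeal to Lemma~\ref{lem-affine1} (or reprove its covering argument); as written, the step where you identify the rotated configuration with the one defining $\C$ does not go through.
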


Hereafter,  as usual, we denote $\fint_{A} f = \frac{1}{|A|} \int_A f$. 

\begin{proof} By a change of variables, without loss of generality, it suffices to prove Lemma~\ref{lem-Lp} in the case $\widetilde Q = Q$, $|a| = 1$,  and $b=0$. We prove this by contradiction. Suppose that this
is not true. There exist $ \eps_0>0$, a sequence of measurable
functions $(h_n) \subset L^p(Q)$, a sequence $(a_n)
\subset \mR^d$,  and a
sequence $(\delta_n)$ converging to 0 such that $| a_n
|=1$,
\begin{equation*}
\int_{Q} |h_n(x) - \langle a_n, x \rangle|^p  \le \frac{1}{n},  \quad \mbox{ and } \quad \Lambda_{\delta_n}(h_n, Q) <   \C - \eps_0. 
\end{equation*}
Without loss of generality, we may assume that $(a_{n})$ converges
to $a$ for some $a \in \mR^d$ with $|a| =1$. It follows that  $( h_{n} ) $ converges to $\langle a, .
\rangle $ in $L^p(Q)$.  Applying  Lemma~\ref{lem-affine1}  with $S = Q$ and $g = \langle a , \cdot \rangle$, we obtain a contradiction.  The conclusion follows. 
\end{proof}

The second key ingredient in the proof of Property (G1) is the following useful property of functions in $W^{1, p}(\mR^d)$.

\begin{lemma} \label{lem-lem} Let $d \ge 1$, $p > 1$,  and $u \in W^{1, p}(\mR^d)$. Given $\eps_1 > 0$, there exist a subset $B = B(\eps_1)$ of Lebesgue points of $u$ and  $\nabla u$, and an integer  $\ell  = \ell (\eps_1)\ge 1$ such that 
\begin{equation}\label{G1-p-pro-Am}
\int_{\mR^d \setminus B} |\nabla u|^p \, dx \le \eps_1 \int_{\mR^d} |\nabla u|^p \, dx, 
\end{equation}
and, for every  open cube $Q'$ with $|Q'|^{1/d} \le 1/\ell$ and  $Q' \cap B \neq \emptyset$,   and for every $x \in Q' \cap B$, 
\begin{equation}\label{lem-lem-1}
\frac{1}{|Q'|^p }\fint_{Q'} \big|u(y) - u(x) - \langle \nabla u (x), y -x \rangle \big|^p \, d y \le  \eps_1  
\end{equation}
and 
\begin{equation}\label{lem-lem-2}
|\nabla u(x)|^p \ge (1 - \eps_1) \fint_{Q'} |\nabla u(y)|^p \, dy. 
\end{equation}
\end{lemma}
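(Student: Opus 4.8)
\textbf{Proof plan for Lemma~\ref{lem-lem}.}

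The plan is to build $B$ and $\ell$ by combining three classical ``almost everywhere'' facts about $W^{1,p}$ functions, each of which holds on a set of full measure, and then to extract from their intersection a large set on which the stated estimates hold \emph{uniformly} at a fixed scale. Specifically: (a) Lebesgue's differentiation theorem gives a.e. $x$ as a Lebesgue point of both $u$ and $\nabla u$; (b) the $L^p$ differentiability of Sobolev functions (the $L^p$-Taylor expansion, see e.g. the theory of Lebesgue points for Sobolev maps) gives that for a.e. $x$,
\[
\frac{1}{r^{p}}\fint_{Q(x,r)}\big|u(y)-u(x)-\langle\nabla u(x),y-x\rangle\big|^p\,dy \longrightarrow 0 \quad\text{as } r\to 0;
\]
(c) continuity of $\nabla u$ in the $L^p$-averaged sense, i.e. $\fint_{Q(x,r)}|\nabla u(y)|^p\,dy\to|\nabla u(x)|^p$ for a.e. $x$. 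Denote by $E$ the set of full measure where all three limits hold; on $E$ both \eqref{lem-lem-1} and \eqref{lem-lem-2} hold for each $x$ provided the side length is small enough, but the ``small enough'' depends on $x$.

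The main obstacle is precisely upgrading this pointwise-in-$x$ smallness to a uniform scale $1/\ell$ valid simultaneously for all $x\in B$. I would handle this with an Egorov-type / exhaustion argument. For $x\in E$ and $k\in\mathbb{N}$ let $\rho_k(x)$ be the supremum of those $r\le 1/k$ for which the averaged Taylor remainder in (b) is $\le\eps_1/2$ and the ratio in (c) lies in $[1-\eps_1,\,1+\eps_1]$ for \emph{all} $s\le r$; then $\rho_k(x)>0$ on $E$. The sets $E_k:=\{x\in E:\rho_k(x)\ge 1/k\}$ increase to $E$ (mod null sets), so $\int_{\mathbb{R}^d\setminus E_k}|\nabla u|^p\to 0$; choose $k$ so large that this integral is $\le\eps_1\int_{\mathbb{R}^d}|\nabla u|^p$, and set $B:=E_k$, $\ell:=2k$ (the factor $2$ to be safe after passing from cubes $Q'$ to cubes $Q(x,r)$ centered at $x\in Q'\cap B$ with $r$ comparable to $|Q'|^{1/d}$). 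Here one must be slightly careful: \eqref{lem-lem-1}--\eqref{lem-lem-2} are asked for an arbitrary small cube $Q'$ meeting $B$ and for $x\in Q'\cap B$, whereas the differentiation statements are naturally about cubes \emph{centered} at $x$; since $Q'\subset Q(x,2|Q'|^{1/d})$ and $|Q(x,2|Q'|^{1/d})|\sim |Q'|$, the averaged remainder over $Q'$ is bounded by a dimensional constant times that over $Q(x,2|Q'|^{1/d})$, and the power of $|Q'|$ is absorbed by shrinking $\eps_1$ to $c_d\eps_1$ at the outset — since $\eps_1$ is arbitrary this costs nothing. The monotonicity built into $\rho_k$ guarantees the estimates persist for \emph{all} admissible scales $\le 1/\ell$, not just one.

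Finally, $B$ is a subset of $E$, hence consists of Lebesgue points of $u$ and $\nabla u$ as required, $\ell=\ell(\eps_1)\ge 1$, and \eqref{G1-p-pro-Am} holds by the choice of $k$. I would write up (a)--(c) with precise references (Lebesgue points of Sobolev functions and the $L^p$-Taylor expansion are standard, e.g. via Poincaré's inequality applied to $u(\cdot)-u(x)-\langle\nabla u(x),\cdot-x\rangle$ together with $\nabla u\in L^p$), then give the exhaustion argument in the form above. The only genuinely delicate point is the uniformity of the scale, which the nested sets $E_k$ resolve; everything else is routine measure theory and Sobolev-space bookkeeping.
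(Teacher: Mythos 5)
Your overall strategy is the right one and matches the paper's in spirit: the paper invokes the $L^p$-differentiability of Sobolev functions (Ziemer, Theorem 3.4.2), defines the moduli $\rho_n(x)=\sup_{r<1/n}r^{-p}\fint_{Q(x,r)}|u(y)-u(x)-\langle\nabla u(x),y-x\rangle|^p\,dy$ and $\tau_n(x)=\sup_{r<1/n}\fint_{Q(x,r)}|\nabla u(y)-\nabla u(x)|^p\,dy$, and uses Egorov to make them uniformly small on a set $B$ carrying almost all of $\int|\nabla u|^p$; your nested sets $E_k$ are essentially Egorov done by hand and would serve the same purpose. The genuine gap is in how you pass from centered cubes $Q(x,r)$ to the arbitrary cubes $Q'\ni x$ of the statement, specifically for \eqref{lem-lem-2}. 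For \eqref{lem-lem-1} the target is an absolute small constant, so the dimensional factor $|Q(x,s)|/|Q'|=2^d$ coming from $\fint_{Q'}\le 2^d\fint_{Q(x,s)}$ (with $s=|Q'|^{1/d}$) is indeed absorbed by starting from $c_d\eps_1$. But \eqref{lem-lem-2} is a multiplicative bound with constant $(1-\eps_1)$ close to $1$: from your ratio condition $\fint_{Q(x,s)}|\nabla u|^p\le(1+\eps_1)|\nabla u(x)|^p$ you only get $\fint_{Q'}|\nabla u|^p\le 2^d(1+\eps_1)|\nabla u(x)|^p$, and the factor $2^d$ cannot be shrunk away by decreasing $\eps_1$ (that makes $(1-\eps_1)$ larger, i.e.\ the target inequality harder). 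So the step ``absorbed by shrinking $\eps_1$ to $c_d\eps_1$'' fails for \eqref{lem-lem-2}.

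The missing idea is the one the paper uses: work with the difference $\fint_{Q'}|\nabla u(y)-\nabla u(x)|^p\,dy$ rather than the ratio, and restrict $B$ to $D_m=\{x\in(-m,m)^d:\ |\nabla u(x)|\ge 1/m\}$, choosing $m$ so that $\int_{\mR^d\setminus D_m}|\nabla u|^p\le\tfrac{\eps_1}{2}\int_{\mR^d}|\nabla u|^p$ (possible since the discarded points contribute little). Then $\bigl(\fint_{Q'}|\nabla u|^p\bigr)^{1/p}\le\bigl(\fint_{Q'}|\nabla u(y)-\nabla u(x)|^p\,dy\bigr)^{1/p}+|\nabla u(x)|$, the first term is at most $2^{d/p}\tau_n(x)^{1/p}$, an absolute quantity made uniformly $\le\bigl((1-\eps_1)^{-1/p}-1\bigr)/m$ on $B$, and the lower bound $|\nabla u(x)|\ge 1/m$ converts this additive error into the multiplicative estimate \eqref{lem-lem-2}; the $2^d$ is harmless there because it only enters the absolute smallness. (Alternatively, quantify your ratio condition over all, not just centered, cubes $Q'\ni x$ of side $\le r$; that still holds for small $r$ at a.e.\ $x$ with $\nabla u(x)\neq 0$, and your exhaustion then goes through.) A related minor imprecision: your claims that $\rho_k(x)>0$ on all of $E$ and that $E_k\uparrow E$ mod null fail at points where $\nabla u(x)=0$ (the ratio is $0/0$ or $+\infty$ there); your argument survives only because such points contribute nothing to $\int|\nabla u|^p$, which is precisely why the paper's $D_m$ device is the cleaner route.
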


\begin{proof}
We first recall the following property of $W^{1, p}(\mR^d)$ functions (see e.g., \cite[Theorem 3.4.2]{Ziemer}): for a.e. $x \in \mR^d$,
\begin{equation}\label{G1-p-measure}
\lim_{r \to 0} \frac{1}{r^p} \mathop{\fint}_{Q(x, r)} \big|u(y) - u(x) - \langle \nabla u(x), y - x \rangle \big|^p \, dy = 0, 
\end{equation}
where $Q(x, r): = x + (-r, r)^d$ for $x \in \mR^d$ and $r>0$. 

Given $n \in \mN$, define, for a.e. $x \in \mR^d$,
\begin{equation}\label{G1-p-def-rho}
\rho_n(x) = \sup \left\{  \frac{1}{r^{p}}
\mathop{\fint}_{Q(x, r)} \big|u(y) - u(x) - \langle \nabla u(x), y-x \rangle \big|^p \, dy ; \; r  \in (0, 1/n) \right\}
\end{equation}
and
\begin{equation}\label{G1-p-def-tau}
\tau_n(x) = \sup  \left\{ \mathop{\fint}_{Q(x, r)}| \nabla u(y) -  \nabla u(x)|^p \, dy;  r \in (0, 1/n)  \right\}. 
\end{equation}
Note that, by  \eqref{G1-p-measure}, $\rho_n(x) \to 0$ for a.e. $x \in \mR^d$ as $n \to + \infty$. We also have, $\tau_n (x) \to 0$ for a.e. $x \in \mR^d$ as $n \to + \infty$ (and in fact at every Lebesgue point of $\nabla u$). For $m \ge 1$, set 
$$
D_m = \Big\{x \in (-m, m)^d; \mbox{$x$ is a Lebesgue point of $u$ and $\nabla u$, and }  |\nabla u(x)| \ge 1/ m \Big\}. 
$$
Since 
\begin{equation*}
\lim_{m \to + \infty}  \int_{\mR^d \setminus D_m} |\nabla u|^p \, dx = 0, 
\end{equation*}
there exists $m \ge 1$ such that  
\begin{equation}\label{G1-p-pro-Am-1}
\int_{\mR^d \setminus D_m} |\nabla u|^p \, dx \le \frac{\eps_1}{2} \int_{\mR^d} |\nabla u|^p \, dx.  
\end{equation}
Fix such an $m$.  By Egorov's theorem, there exists a  subset $B \subset D_m$ such that  $(\rho_n)$ and $(\tau_n)$ converge to $0$ uniformly on $B$, and 
\begin{equation}\label{G1-p-pro-Am-2}
\int_{D_m \setminus B} |\nabla u|^p \, dx \le \frac{\eps_1}{2} \int_{\mR^d} |\nabla u|^p \, dx.  
\end{equation}
Combining \eqref{G1-p-pro-Am-1} and \eqref{G1-p-pro-Am-2} yields \eqref{G1-p-pro-Am}. 

By the triangle inequality, we have, for every non-empty, open cube $Q'$ and a.e. $x \in \mR^d$ (in particular for $x \in Q' \cap B$),  
\begin{equation}\label{lem-lem-*}
\left(\fint_{Q'} |\nabla u(y)|^p \, dy \right)^{1/p} \le \left(\fint_{Q'} |\nabla u(y) - \nabla u(x) |^p \, dy \right)^{1/p} + |\nabla u(x)| \le \frac{|\nabla u(x)|}{(1 -\eps_1)^{1/p}}, 
\end{equation}
provided 
$$
\left(\fint_{Q'} |\nabla u(y) - \nabla u(x) |^p \, dy \right)^{1/p}  \le \left( \frac{1}{(1 -\eps_1)^{1/p}} - 1 \right) 1/m \quad \mbox{ and } \quad |\nabla u(x)| \ge 1/m.
$$
Since $(\rho_n)$ and $(\tau_n)$ converge to $0$ uniformly on $B$ and $|\nabla u(x)| \ge 1/ m$  for $x \in B$, it follows from \eqref{lem-lem-*} that there exists an $\ell \ge 1$ such that \eqref{lem-lem-1} and  \eqref{lem-lem-2} hold when $|Q'|^{1/d} \le 1/\ell$ and  $Q' \cap B \neq \emptyset$,   and $x \in Q' \cap B$. The proof is complete. 
\end{proof}

We are ready to give the

\begin{proof}[Proof of Property (G1)]  
We only consider the case $\Omega = \mR^d$. The other case can be handled as in \cite{BrNg-Nonlocal1} and is left to the reader. We follow the same strategy as in \cite{NgGamma}.

In order to  establish Property (G1), it suffices to prove that 
\begin{equation}
\liminf_{k \to + \infty} \Lambda_{\delta_k}(g_k, \mR^d) \ge \C \int_{\mR^d} |\nabla g|^p \,dx 
\end{equation}
for every $g \in L^p(\mR^d)$, $(\delta_k) \subset \mR_+$ and $(g_k) \subset L^p(\mR^d)$ such that $\delta_k \to 0$ and  $g_k \to g$ in $L^p(\mR^d)$.

Without loss of generality, we may assume that $\liminf_{k \to + \infty} \Lambda_{\delta_k}(g_k, \mR^d) < + \infty$. It follows from \cite{NgGamma} that $g \in W^{1, p}(\mR^d)$.  Fix $\eps > 0$ (arbitrary) and  let $\hat \delta_1$  be
the positive constant in \Cref{lem-Lp}. Set, for $m \ge 1$, 
$$
A_m = \Big\{x \in \mR^d; \; \mbox{ $x$ is a Lebesgue point of $g$ and  $\nabla g$, and } |\nabla g(x)| \le 1/m \Big\}.
$$
Since 
\begin{equation*}
\lim_{m \to + \infty}  \int_{A_m} |\nabla g|^p \, dx = 0, 
\end{equation*}
there exists $m \ge 1$ such that
\begin{equation}\label{G1-p-pro-Am-11}
\int_{A_m} |\nabla g|^p \, dx \le \frac{\eps}{2} \int_{\mR^d} |\nabla g|^p \, dx.  
\end{equation}
Fix such an integer $m$. 
 By Lemma~\ref{lem-lem} applied to $u = g$ and $\eps_1 = \min\{\eps/2, \delta_1/ (2m)^p \}$, there exist a subset $B$ of Lebesgue points of $g$ and $\nabla g$, and a positive integer $\ell$ such that 
\begin{equation}\label{estBm-11}
\int_{\mR^d \setminus B} | \nabla g|^p \, dx \le  \eps_1 \int_{\mR^d}
|\nabla g|^p \, dx \le  \frac{\eps}{2} \int_{\mR^d}
|\nabla g|^p \, dx, 
\end{equation}
and for every open cube  $Q'$ with $|Q'|^{1/d} \le 1 / \ell$ and  $Q' \cap B \neq \emptyset$,  and,  for every $x \in Q' \cap B$,  
\begin{equation}\label{G1-p-rho-p1}
\frac{1}{|Q'|^{p/d}}
\fint_{Q'} \big|g(y) - g(x) - \langle \nabla g(x), y - x \rangle   \big|^p \, dy \le \eps_1 \le  \hat \delta_1/ (2m)^p
\end{equation}
and
\begin{equation}\label{G1-p-tau-p2}
|\nabla g(x)|^p |Q'| \ge  (1 - \eps_1) \int_{Q'} |\nabla g |^p \, dy \ge  (1 - \eps) \int_{Q'} |\nabla g |^p \, dy. 
\end{equation}
Fix such a set $B$ and such an integer $\ell$. Set 
$$
B_m : = B \setminus A_m. 
$$
Since  $\mR^d \setminus (B \setminus A_m) \subset (\mR^d \setminus B) \cup A_m$, it follows  that 
\begin{equation*}
\int_{\mR^d \setminus B_m} | \nabla g|^p \, dx
=  \int_{\mR^d \setminus (B \setminus A_m)} | \nabla g|^p \, dx
\le \int_{\mR^d \setminus B } | \nabla g|^p \, dx
+ \int_{A_m} | \nabla g|^p \, dx. 
\end{equation*}
We deduce from \eqref{G1-p-pro-Am-11} and \eqref{estBm-11} that 
\begin{equation}\label{estBm}
\int_{\mR^d \setminus B_m} | \nabla g|^p \, dx
\le  \eps \int_{\mR^d}
|\nabla g|^p \, dx.  
\end{equation}

Set $P_\ell = \frac{1}{\ell} \mZ^d$. Let ${\bf \Omega}_\ell$ be the collection of all open
cubes with side length $1 / \ell$ whose vertices  belong to $P_\ell$ and denote 
\begin{equation*}
{ \bf
J}_\ell = \Big\{  Q' \in {\bf \Omega}_\ell ; \; Q'
\cap B_m \neq \emptyset \Big\}.
\end{equation*}
Take $Q' \in {\bf J}_\ell$ and $x \in Q' \cap B_m$. Since $g_k \to g$ in $L^p(Q')$, from 
\eqref{G1-p-rho-p1}, we obtain, for large $k$, 
\begin{equation*}
\frac{1}{|Q'|^{p/d}} \fint_{Q'} \big|g_k(y) - g(x) - \langle \nabla g(x), y - x \rangle  \big|^p \, dy < \hat \delta_1/ m^p \le \hat \delta_1 |\nabla g (x)|^p,  
\end{equation*}
since $|\nabla g(x)| \ge 1/m$ for $x \in B_m \subset \mR^d \setminus A_m$.   
Next, we apply Lemma~\ref{lem-Lp} with $\widetilde Q = Q'$, $h= g_k$, $a = \nabla g(x)$, $b= g(x)$, and large $k$;   we have 
\begin{equation*}
\Lambda_{\delta}(g_k, Q')  \ge
(\C- \eps) |\nabla g(x)|^p |Q'| \mbox{ for } \delta \in (0, \hat \delta_2 |\nabla g(x)|^p |Q'|^{1/d}),
\end{equation*}
which implies, by \eqref{G1-p-tau-p2},
\begin{equation}\label{estI'}
\liminf_{k \to + \infty}\Lambda_{\delta_k}(g_k, Q') \ge
(\C- \eps) (1- \eps)\int_{Q'} |\nabla g|^p \, dy.
\end{equation}
Since
\begin{align*}
\liminf_{k \to + \infty} \Lambda_{\delta_k}(g_k, \mR^d) \ge  \sum_{Q' \in
{\bf J}_\ell }\liminf_{k \to + \infty} \Lambda_{\delta}(g_k, Q'),
\end{align*}
it follows from \eqref{estI'} that
\begin{multline*}
\liminf_{k \to + \infty} \Lambda_{\delta_k}(g_k, \mR^d)
\ge (\C- \eps) (1- \eps) \sum_{Q' \in {\bf J}_\ell} \int_{Q'} |\nabla g|^p \, dx \\[6pt]
\ge  (\C- \eps) (1- \eps) \int_{B_m} |\nabla g|^p \, dx \mathop{\ge}^{\eqref{estBm}} 
 (\C- \eps) (1- \eps)^2 \int_{\mR^d} |\nabla g|^p \, dx; 
\end{multline*}
in the second inequality, we have  used the fact
$B_m$ is contained in $  \bigcup_{Q' \in {\bf J}_\ell} Q'$ up to a null set. 
Since $\eps>0$ is arbitrary, one has
\begin{align*}
\liminf_{k \to + \infty}\Lambda_{\delta_k}(g_k, \mR^d) \ge
\C \int_{\mR^d} |\nabla g|^p \, dx.
\end{align*}
The proof is complete. 
\end{proof}

\subsection{Proof of Property (G2)}

The proof of Property (G2) for $p>1$ is the same as the one for $p=1$ given in \cite{BrNg-Nonlocal1}. The details are omitted.  

\bigskip 
\noindent {\bf Acknowledgments.} This work was completed during a visit of H.-M. Nguyen at Rutgers University. He thanks H. Brezis for the invitation and  the Department of  Mathematics  for its hospitality.

\bigskip
\bigskip 
\noindent {\centerline {\bf REFERENCES}}

%
%
\end{document}